\newcounter{minutes}\setcounter{minutes}{\time}
\newcounter{hours}\setcounter{hours}{\time}
\newcommand{\real}{\operatorname{Re}}
\newtheorem{theorem}{Theorem}
\newtheorem{lemma}{Lemma}
\keywords{Normalized Bessel functions of the first kind; convex functions; radius of convexity; Dini function; Lommel polynomials; minimum principle for harmonic functions; zeros of Bessel functions} \subjclass[2010]{33C10, 30C45.}
\title[\'A. Baricz, R. Sz\'asz/The radius of convexity of normalized Bessel functions]{The radius of convexity of normalized Bessel functions}
\author[]{\'Arp\'ad Baricz}
\address{Department of Economics, Babe\c{s}-Bolyai University, 400591 Cluj-Napoca, Romania}
\address{Institute of Applied Mathematics, \'Obuda University, 1034 Budapest, Hungary}
\email{bariczocsi@yahoo.com}
\author[]{R\'obert Sz\'asz}
\address{Department of Mathematics and Informatics, Sapientia Hungarian University of Transylvania, 540485 T\^argu-Mure\c{s}, Romania}
\email{rszasz@ms.sapientia.ro}
\thanks{$^{\bigstar}$The research of \'A. Baricz was supported by a research grant of the Romanian National Authority for Scientific Research, CNCS-UEFISCDI, project number PN-II-RU-TE-2012-3-0190.}
\begin{document}

\def\thefootnote{}
\footnotetext{ \texttt{File:~\jobname .tex,
          printed: \number\year-0\number\month-\number\day,
          \thehours.\ifnum\theminutes<10{0}\fi\theminutes}
} \makeatletter\def\thefootnote{\@arabic\c@footnote}\makeatother

\maketitle

\begin{abstract}
The radius of convexity of two normalized Bessel functions of the first kind are determined in the case when the order is between $-2$ and $-1.$ Our methods include the minimum principle for harmonic functions, the Hadamard factorization of some Dini functions, properties of the zeros of Dini functions via Lommel polynomials and some inequalities for complex and real numbers.
\end{abstract}

\section{\bf Introduction and the Main Results}

Let $\mathbb{D}(z_0,r)=\{z\in\mathbb{C}:|z-z_0|<r\}$ denote the open disk centered at $z_0$ and of radius $r>0.$ Let $\mathcal{A}$ be the
class of analytic and univalent functions which are defined on the disk $\mathbb{D}(0,r),$ and are normalized by the conditions $f(0)=f'(0)-1=0$. Note that a function $f\in\mathcal{A}$ is convex in $\mathbb{D}(0,r)$ if and only if
$$\real \left(1+\frac{zf''(z)}{f'(z)}\right)>0 \ \mbox{for all}\ z\in \mathbb{D}(0,r).$$
Moreover, we say that $f$ is a convex function of order $\alpha\in[0,1)$ in $\mathbb{D}(0,r)$ if
$$\real \left(1+\frac{zf''(z)}{f'(z)}\right)>\alpha \ \mbox{for all}\ z\in \mathbb{D}(0,r),$$
and the real number
$$r^c_{f}(\alpha)=\sup\left\{r\in(0,\infty):\ \real \left(1+\frac{zf''(z)}{f'(z)}\right) >\alpha,\ \mbox{for all}\ z\in{\mathbb{D}}(0,r)\right\}$$
is called the radius of convexity of order $\alpha$ of the function $f$. The real number $r^{c}_f(0)=r^{c}_f$ is in fact the largest radius for which the image domain $f(\mathbb{D}(0,r^c_f))$ is convex.

The Bessel function of the first kind is defined by the following power series
$$J_{\nu}(z)=\sum_{n\geq0}\frac{(-1)^{n}\left(z/2\right)^{2n+\nu}}{n!\Gamma(n+\nu+1)}.$$
The radii of convexity (and of starlikeness) of the next three kind of normalized Bessel functions
$$f_{\nu}(z)=(2^{\nu}\Gamma(1+\nu)J_{\nu}(z))^{{1}/{\nu}},\nu\neq{0},$$
$$g_{\nu}(z)=2^{\nu}\Gamma(1+\nu)z^{1-\nu}J_{\nu}(z),$$
$$h_{\nu}(z)=2^{\nu}\Gamma(1+\nu)z^{1-\nu/2}J_{\nu}(z^{\frac{1}{2}})$$
were investigated in the papers \cite{bks,bs,br,sz}, see also the references therein. The paper \cite{bs} contains the radius of convexity of the functions $f_{\nu},$ $g_{\nu}$ and $h_{\nu}$ in the case when $\nu>-1.$ In the proof of the main results of \cite{bs} it was essential that the Bessel functions of the first kind have only real zeros and also the fact that some Dini functions of the form $z\mapsto aJ_{\nu}(z)+bzJ'_{\nu}(z)$ have only real zeros too. This time we deal with the convexity of $g_{\nu}$ and $h_{\nu}$ in the case when $\nu\in(-2,-1).$ In this case the function $z\mapsto aJ_{\nu}(z)+bzJ'_{\nu}(z)$ has complex zeros too and this complicates the problem. If $\nu\in(-2,-1),$ then the method which has been used in \cite{bs} is not applicable directly. To face the difficulty we first prove some results on the zeros of Dini functions when $\nu\in(-2,-1).$ We follow the method of Hurwitz and we use the Lommel polynomials, see Lemma \ref{lemroots} and its proof. Since our method in this paper cannot be used for the function $f_{\nu},$ we note that it would of interest to see the radius of convexity of $f_{\nu}$ when $\nu\in(-1,0)$ or $(-2,-1).$

The paper is organized as follows: at the end of this section we present the main results; section 2 contains the preliminary results and their proofs; while section 3 contains the proofs of the main results. It is worth to mention that Lemma \ref{lemroots}, Lemma \ref{lemg} and Lemma \ref{lemh} are the key tools of the main results, but actually they are of independent interest and may be useful in other problems related to Bessel functions and zeros of Bessel functions. Here and in the sequel $I_{\nu}$ denotes the modified Bessel function of the first kind and order $\nu.$

Our main results of the paper are the following theorems.

\begin{theorem}\label{th1}
If $\nu\in(-2,-1)$ and $\alpha\in[0,1),$ then the radius of convexity of order $\alpha$ of the function $g_{\nu}$ is the smallest positive root of the equation
\begin{equation}\label{k2ac7zdl0m}
 1+r\frac{rI_{\nu+2}(r)+3I_{\nu+1}(r)}{I_{\nu}(r)+rI_{\nu+1}(r)}=\alpha.
\end{equation}
 \end{theorem}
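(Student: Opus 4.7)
The strategy is to apply the Hadamard factorization to $g_\nu'$, expand $1+zg_\nu''(z)/g_\nu'(z)$ as an infinite series with the help of Lemma \ref{lemroots}, and then locate the minimum of the real part on the circle $|z|=r$. Using the recurrence $zJ_\nu'(z)=\nu J_\nu(z)-zJ_{\nu+1}(z)$ one computes
$$
g_\nu'(z)=2^\nu\Gamma(\nu+1)z^{-\nu}\bigl[J_\nu(z)-zJ_{\nu+1}(z)\bigr],
$$
so $\psi_\nu(z):=z^{-\nu}[J_\nu(z)-zJ_{\nu+1}(z)]$ is an even real entire function of order one. For $\nu\in(-2,-1)$, Lemma \ref{lemroots} asserts that $\psi_\nu$ has exactly one pair $\pm i\sigma_\nu$ of purely imaginary zeros (smallest in modulus) and countably many real pairs $\pm\alpha_{\nu,n}$, $n\geq 2$, whence Hadamard yields
$$
\psi_\nu(z)=\psi_\nu(0)\!\left(1+\frac{z^2}{\sigma_\nu^2}\right)\prod_{n\geq 2}\!\left(1-\frac{z^2}{\alpha_{\nu,n}^2}\right).
$$
Logarithmic differentiation then produces
$$
F(z):=1+\frac{zg_\nu''(z)}{g_\nu'(z)}=1+\frac{2z^2}{z^2+\sigma_\nu^2}+\sum_{n\geq 2}\frac{2z^2}{z^2-\alpha_{\nu,n}^2}.
$$

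Setting $\tilde g_\nu(z):=-ig_\nu(iz)=2^\nu\Gamma(\nu+1)z^{1-\nu}I_\nu(z)$ and using the recurrences $zI_\nu'(z)=\nu I_\nu(z)+zI_{\nu+1}(z)$ and $zI_{\nu+1}'(z)=(\nu+1)I_{\nu+1}(z)+zI_{\nu+2}(z)$, a short computation identifies the left hand side of \eqref{k2ac7zdl0m} with $1+r\tilde g_\nu''(r)/\tilde g_\nu'(r)$. Since $\tilde g_\nu$ is real entire, this in turn equals $1+(ir)g_\nu''(ir)/g_\nu'(ir)=F(ir)$, so equation \eqref{k2ac7zdl0m} reads $F(ir)=\alpha$. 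The function $r\mapsto F(ir)$ is continuous on $(0,\sigma_\nu)$, tends to $1$ as $r\to 0^+$, and tends to $-\infty$ as $r\to\sigma_\nu^-$ by the contribution of the pole at $z=i\sigma_\nu$; hence for every $\alpha\in[0,1)$ there is a smallest positive root $r_\alpha<\sigma_\nu$ of \eqref{k2ac7zdl0m}.

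The decisive step, and the main obstacle, is the inequality
$$
\operatorname{Re}F(z)\geq F(ir),\qquad |z|=r\in(0,\sigma_\nu),
$$
with equality exactly at $z=\pm ir$; granted this, $r_\alpha$ is the radius of convexity of order $\alpha$ of $g_\nu$. The natural term-by-term bounds $\operatorname{Re}(2z^2/(z^2-\alpha_{\nu,n}^2))\geq 2r^2/(r^2-\alpha_{\nu,n}^2)$ (attained at $z=\pm r$) and $\operatorname{Re}(2z^2/(z^2+\sigma_\nu^2))\geq -2r^2/(\sigma_\nu^2-r^2)$ (attained at $z=\pm ir$) are extremal at competing points, so merely adding them is strictly weaker than $F(ir)$. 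The way around this is the factorization
$$
F(z)-F(ir)=2(z^2+r^2)\!\left[\frac{\sigma_\nu^2}{(z^2+\sigma_\nu^2)(\sigma_\nu^2-r^2)}+\sum_{n\geq 2}\frac{\alpha_{\nu,n}^2}{(z^2-\alpha_{\nu,n}^2)(r^2+\alpha_{\nu,n}^2)}\right],
$$
which exposes the vanishing at $z=\pm ir$ explicitly via the factor $z^2+r^2=2r^2e^{i\theta}\cos\theta$ (with $z=re^{i\theta}$). Combining this with the fourfold symmetry $\operatorname{Re}F(z)=\operatorname{Re}F(-z)=\operatorname{Re}F(\bar z)$ and the minimum principle for harmonic functions applied on a quarter sector of $\{|z|<\sigma_\nu\}$, the task reduces to a scalar estimate comparing the imaginary-zero contribution with the sum of the real-zero contributions; this is the role of Lemma \ref{lemg}. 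The critical structural input coming from the Lommel-polynomial analysis of Lemma \ref{lemroots} is the ordering $\sigma_\nu<\alpha_{\nu,n}$ for all $n\geq 2$: it is this that makes the imaginary singularity dominate and shifts the extremal point on $|z|=r$ from $z=\pm r$ (the situation for $\nu>-1$ treated in \cite{bs}) to $z=\pm ir$.
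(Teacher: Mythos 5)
Your setup---the Hadamard/Mittag--Leffler expansion of $F(z)=1+zg_\nu''(z)/g_\nu'(z)$, the identification of \eqref{k2ac7zdl0m} with $F(ir)=\alpha$ via $J_\nu(ir)=i^\nu I_\nu(r)$, and the reduction of the theorem to the boundary inequality $\real F(z)\geq F(ir)$ for $|z|=r<a$ (your $\sigma_\nu$ is the paper's $a$)---all matches the paper, and you correctly diagnose why termwise bounds fail: the real-zero terms are extremal at $z=\pm r$, the imaginary-zero term at $z=\pm ir$. But the decisive inequality is exactly what your proposal does not prove. The identity $F(z)-F(ir)=2(z^2+r^2)\bigl[\cdots\bigr]$ is algebraically correct, yet on $|z|=r$ both the factor $z^2+r^2$ and the bracket are genuinely complex, so the real part of the product is not controlled by any sign information about the bracket alone; the appeal to ``fourfold symmetry, the minimum principle on a quarter sector, and a scalar estimate'' is not an argument, and the cited Lemma \ref{lemg} is the partial-fraction expansion, not an inequality, so it cannot play that role. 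As it stands the proof has a hole at its central step.

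The paper closes this gap with a different, concrete device. Comparing coefficients of $z^2$ in $g_\nu'(z)=\prod_{n\ge1}(1-z^2/\alpha_{\nu,n}^2)$ gives the Rayleigh-type identity \eqref{ed15s4z7ax4x4x}, hence $a^{-2}=-\tfrac{3}{4(\nu+1)}+\sum_{n\ge2}\alpha_{\nu,n}^{-2}$. This is used to split the coefficient $2$ of the imaginary-zero term and recombine each piece with the matching real-zero term, producing
$$1+z\frac{g''_{\nu}(z)}{g'_{\nu}(z)}=1-\frac{3a^2}{2(\nu+1)}\frac{z^2}{a^2+z^2}-2\sum_{n\geq2}\frac{\alpha_{\nu,n}^2+a^2}{\alpha_{\nu,n}^2}\,\frac{z^4}{(\alpha_{\nu,n}^2-z^2)(a^2+z^2)},$$
in which \emph{every} term is extremal at the same point $z=\pm ir$: the first by \eqref{abc2s4d} with $v=z^2$, and each summand by \eqref{4l5h6ddg89u} with $v=z^2$, $\delta=a^2$, $\gamma=\alpha_{\nu,n}^2$ (this is also where the ordering $a<\alpha_{\nu,n}$ from Lemma \ref{lemroots} enters). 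If you want to salvage your factorization instead, you would still need an honest lower bound for $\real\bigl((z^2+r^2)B(z)\bigr)$ on $|z|=r$; the redistribution above is the missing idea that makes a termwise approach succeed.
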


\begin{theorem}\label{th2}
If $\nu\in(-2,-1)$ and $\alpha\in[0,1),$ then the radius of convexity of order $\alpha$ of the function $h_{\nu}$ is the smallest positive root of
the equation
\begin{equation}\label{neweq}1+\frac{rI_{\nu+2}(r^{\frac{1}{2}})+4r^{\frac{1}{2}}I_{\nu+1}(r^{\frac{1}{2}})}{4I_{\nu}(r^{\frac{1}{2}})+
2r^{\frac{1}{2}}I_{\nu+1}(r^{\frac{1}{2}})}=\alpha.\end{equation}
\end{theorem}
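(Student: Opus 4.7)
The plan is to follow the same general scheme as for Theorem~\ref{th1}: obtain a Mittag--Leffler-type partial-fraction expansion of $1+zh''_\nu(z)/h'_\nu(z)$ from the Hadamard factorization of $h'_\nu$, and use the minimum principle for harmonic functions together with a sharp pointwise inequality to show that the real part of this expression is minimised over the closed disk $\overline{\mathbb{D}(0,r)}$ at $z=-r$.

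I begin by computing the derivative. Differentiating $h_\nu(z)=2^\nu\Gamma(1+\nu)z^{1-\nu/2}J_\nu(z^{1/2})$ and using the classical relation $wJ'_\nu(w)=\nu J_\nu(w)-wJ_{\nu+1}(w)$ yields
$$h'_\nu(z)=2^{\nu-1}\Gamma(1+\nu)z^{-\nu/2}\phi_\nu(z^{1/2}),\qquad \phi_\nu(w):=2J_\nu(w)-wJ_{\nu+1}(w),$$
so $h'_\nu$ is governed by the Dini function $\phi_\nu$. After the branch factor is absorbed, $h'_\nu$ becomes an entire function of $z=w^2$ with $h'_\nu(0)=1$, and its zeros are the squares of those of $\phi_\nu$. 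By Lemma~\ref{lemroots} and Lemma~\ref{lemh}, for $\nu\in(-2,-1)$ the zeros of $\phi_\nu$ consist of a single conjugate pair $\pm i\mu_\nu$ of purely imaginary zeros together with real zeros $\pm\alpha_{\nu,n}$ ($n\ge1$); hence the zeros of $h'_\nu$ are $-\mu_\nu^2$ and $\alpha_{\nu,n}^2$, with $\mu_\nu^2$ the smallest in modulus. The Hadamard factorization followed by a logarithmic derivative gives
$$F(z):=1+\frac{zh''_\nu(z)}{h'_\nu(z)}=1+\frac{z}{z+\mu_\nu^2}-\sum_{n\ge 1}\frac{z}{\alpha_{\nu,n}^2-z}.$$

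The crucial step is to show $\real F(z)\ge F(-r)$ for $|z|\le r<\mu_\nu^2$. Since $F$ is analytic on $\mathbb{D}(0,\mu_\nu^2)$, the minimum principle places the minimum of $\real F$ on $\overline{\mathbb{D}(0,r)}$ onto the circle $\{|z|=r\}$; writing $z=re^{i\theta}$, a direct computation gives the identities
$$\real\frac{z}{z+\mu_\nu^2}+\frac{r}{\mu_\nu^2-r}=\frac{2r\mu_\nu^2(\mu_\nu^2+r)\cos^2(\theta/2)}{|\mu_\nu^2+z|^2\,(\mu_\nu^2-r)},$$
$$\real\frac{z}{\alpha_{\nu,n}^2-z}+\frac{r}{\alpha_{\nu,n}^2+r}=\frac{2r\alpha_{\nu,n}^2(\alpha_{\nu,n}^2-r)\cos^2(\theta/2)}{|\alpha_{\nu,n}^2-z|^2\,(\alpha_{\nu,n}^2+r)},$$
both vanishing at $\theta=\pi$, so $\real F(z)-F(-r)$ equals $2r\cos^2(\theta/2)$ times
$$\frac{\mu_\nu^2(\mu_\nu^2+r)}{|\mu_\nu^2+z|^2(\mu_\nu^2-r)}-\sum_{n\ge 1}\frac{\alpha_{\nu,n}^2(\alpha_{\nu,n}^2-r)}{|\alpha_{\nu,n}^2-z|^2(\alpha_{\nu,n}^2+r)}.$$
A $\cos\theta$-monotonicity analysis shows the worst case is $\theta=0$, where this bracket reduces to $\mu_\nu^{2}/(\mu_\nu^4-r^2)-\sum_n\alpha_{\nu,n}^{2}/(\alpha_{\nu,n}^4-r^2)$. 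Proving this non-negative is the main obstacle; I expect to combine the ordering $\mu_\nu^2<\alpha_{\nu,1}^2<\alpha_{\nu,2}^2<\cdots$ supplied by Lemma~\ref{lemh} with the base identity
$$\frac{1}{\mu_\nu^2}-\sum_{n\ge 1}\frac{1}{\alpha_{\nu,n}^2}=h''_\nu(0)=-\frac{1}{2(\nu+1)}>0,\qquad\nu\in(-2,-1),$$
obtained by matching the coefficient of $z$ in the logarithm of the Hadamard product for $h'_\nu$ with the coefficient coming from the power series of $h_\nu$.

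Finally, to obtain the explicit equation \eqref{neweq}, I evaluate $F(-r)$ by substituting $w=ir^{1/2}$ into the equivalent form $F(z)=1-\nu/2+w\phi'_\nu(w)/(2\phi_\nu(w))$ and using $J_\mu(ir^{1/2})=i^\mu I_\mu(r^{1/2})$. Expanding $\phi'_\nu$ via the recurrences $wJ'_\mu(w)=\mu J_\mu(w)-wJ_{\mu+1}(w)$ writes everything in terms of $J_\nu$, $J_{\nu+1}$, $J_{\nu+2}$; after the substitution the factors of $i^\nu$ cancel and the quotient collapses to the left-hand side of \eqref{neweq}. Setting $F(-r)=\alpha$ and taking the smallest positive root then yields the radius of convexity of order $\alpha$, as claimed.
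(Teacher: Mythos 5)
Your proposal is correct and shares the paper's overall architecture -- the partial-fraction expansion of $1+zh''_{\nu}(z)/h'_{\nu}(z)$ coming from Lemmas \ref{lemroots} and \ref{lemh}, the coefficient identity $\sum_{n\geq1}1/\beta^2_{\nu,n}=1/(2(\nu+1))$ from the Hadamard product, the conclusion that $\real(1+zh''_{\nu}(z)/h'_{\nu}(z))$ is minimised over $\overline{\mathbb{D}(0,r)}$ at $z=-r$, the minimum principle, and the evaluation at $z=-r$ via $J_{\mu}(ix)=i^{\mu}I_{\mu}(x)$ to recover \eqref{neweq}. Where you genuinely diverge is in the proof of the key pointwise bound $\real F(z)\geq F(-r)$. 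The paper first uses the coefficient identity to regroup the expansion as $1-\frac{c^2}{2(\nu+1)}\frac{z}{c^2+z}-\sum_{n\geq2}\frac{c^2+\beta^2_{\nu,n}}{\beta^2_{\nu,n}}\frac{z^2}{(c^2+z)(\beta^2_{\nu,n}-z)}$ (the regrouping makes the first coefficient positive) and then bounds each regrouped term by the prepared inequalities \eqref{abc2s4d} and \eqref{4l5h6ddg89u}; you instead compute $\real F(z)-F(-r)$ term by term on $|z|=r$, extract the common factor $2r\cos^2(\theta/2)$, reduce by monotonicity in $\cos\theta$ to $\theta=0$, and only then invoke the coefficient identity. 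Your trigonometric identities are correct (I checked both numerators factor as claimed), and the one step you flag as pending does close exactly as you anticipate: since $1/\mu_{\nu}^2\geq\sum_n1/\alpha_{\nu,n}^2$ and $\mu_{\nu}^2\leq\alpha_{\nu,n}^2$ give $\frac{\mu_{\nu}^2}{\mu_{\nu}^4-r^2}=\frac{1}{\mu_{\nu}^2}\cdot\frac{1}{1-r^2/\mu_{\nu}^4}\geq\sum_n\frac{1}{\alpha_{\nu,n}^2}\cdot\frac{1}{1-r^2/\alpha_{\nu,n}^4}=\sum_n\frac{\alpha_{\nu,n}^2}{\alpha_{\nu,n}^4-r^2}$. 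What your route buys is transparency: the deficit $\real F(z)-F(-r)$ is exhibited explicitly as a nonnegative multiple of a single series inequality, and no auxiliary lemma on complex quotients is needed; what the paper's route buys is reusability, since inequalities \eqref{abc2s4d} and \eqref{4l5h6ddg89u} serve both theorems verbatim. Two small points to tidy up: note explicitly that $F(-r)\to-\infty$ as $r\nearrow\mu_{\nu}^2$ so that the smallest positive root of $F(-r)=\alpha$ exists in $(0,\mu_{\nu}^2)$, and record that the ordering $\mu_{\nu}^2<\alpha_{\nu,n}^2$ you use comes from the interlacing established in the proof of Lemma \ref{lemroots}.
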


\section{\bf Preliminaries}
\setcounter{equation}{0}

In order to prove the main results we need the following preliminary results. The first lemma contains some know
results of Hurwitz on Lommel polynomials (see \cite{hu,wa1} and \cite[p. 305]{wa})
$$g_{2m,\nu}(z)=\sum_{n=0}^m(-1)^n\frac{(2m-n)!}{n!(2m-2n)!}\frac{\Gamma(\nu+2m-n+1)}{\Gamma(\nu+n+1)}z^n.$$

\begin{lemma}\label{lemHur}
Let $m,s\in\mathbb{N}$ and $\nu\in\mathbb{R}.$ The following statements are valid:
\begin{enumerate}
\item[\bf a.] If $\nu>-1,$ then the Lommel polynomial $g_{2m,\nu}(z)$ has only positive real zeros.
\item[\bf b.] If $\nu\in(-2s-2,-2s-1),$ then the polynomial $g_{2m,\nu}(z)$ has $2s$ complex zeros and $m-2s$ real zeros. Between
the real zeros a root is negative and $m-2s-1$ are positive.
\item[\bf c.] If $\nu\in(-2s-1,-2s),$ then $g_{2m,\nu}(z)$ has $2s$ complex zeros and $m-2s$ positive real zeros.
\end{enumerate}
\end{lemma}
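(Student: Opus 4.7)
My plan is to follow Hurwitz's classical approach, which is sketched in Watson's treatise at the places cited in the statement. The starting point is the identity
$$g_{2m,\nu}(z) \;=\; z^{m}\,R_{2m,\nu+1}\bigl(2\sqrt{z}\bigr),$$
obtained by comparing power series, where $R_{2m,\nu+1}$ is the standard Lommel polynomial. Under this identification a nonzero root $z_{0}$ of $g_{2m,\nu}$ corresponds to the pair $w=\pm 2\sqrt{z_{0}}$ of roots of $R_{2m,\nu+1}$: positive real $z_{0}$ gives real $w$, negative real $z_{0}$ gives purely imaginary $w$, and genuinely complex $z_{0}$ gives genuinely complex $w$. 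So the problem reduces to counting real, purely imaginary, and complex zeros of $R_{2m,\nu+1}(w)$.

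The analytical tool is the Hurwitz--Lommel identity (Watson \S 9.65), which writes $R_{2m,\nu+1}(w)$ as a linear combination of products of Bessel functions of orders $\pm\nu$ and $\pm(\nu+2m+1)$; through it, the zero distribution of $R_{2m,\nu+1}$ is controlled by that of $J_{\nu}$ and $J_{\nu+2m+1}$. For part (a) with $\nu>-1$, both Bessel functions have only real positive zeros, and a Rolle-type interlacing argument forces $R_{2m,\nu+1}$ to have only real zeros, hence $g_{2m,\nu}$ to have only positive real zeros. For parts (b) and (c), Watson's classical count (\S 15.27) gives $J_{\nu}$ exactly $2s$ conjugate complex zeros in each of the intervals $(-2s-2,-2s-1)$ and $(-2s-1,-2s)$, with an additional negative real zero appearing only in case (b); meanwhile $J_{\nu+2m+1}$ stays in the only-real-zero regime as soon as $\nu+2m+1>-1$. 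Propagating through the identity, $R_{2m,\nu+1}$ inherits $2s$ non-real zeros (yielding $2s$ complex zeros of $g_{2m,\nu}$) and, in case (b), an additional purely imaginary zero descending to a negative real zero of $g_{2m,\nu}$; the remaining real zeros of $R_{2m,\nu+1}$ are then forced to be positive, matching the claimed counts $m-2s-1$ and $m-2s$.

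The main obstacle will be turning the informal phrase ``the complex zeros are inherited'' into an exact count valid for every $m$ and every $\nu$ in the open interval, not merely asymptotically in $m$. My intended fix is a deformation argument in $\nu$: the zeros of $g_{2m,\nu}(z)$ depend continuously on $\nu$, and their real/complex classification can change only when two zeros collide on the real axis. Inspection of the coefficients, whose signs are governed solely by the Gamma factors $\Gamma(\nu+n+1)$, shows that such collisions can occur only at integer values of $\nu$; since the integer endpoints separating the three regimes are precisely $-2s-1$ and $-2s$, the count is constant within each open interval and may be read off at any convenient test value. As a cross-check, Descartes' rule of signs applied to the explicit coefficients of $g_{2m,\nu}$ yields matching upper bounds on the numbers of positive and negative real roots, and combining these bounds with the fundamental theorem of algebra pins the split between real and complex zeros down exactly.
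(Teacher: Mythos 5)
The paper does not prove this lemma at all: it is presented as a known classical result of Hurwitz, with references to [Hu], [Wa1] and Watson's treatise, so there is no in-paper argument to compare against; your proposal must therefore stand on its own, and it has a genuine gap at its central step. The reduction $g_{2m,\nu}(z)=z^{m}R_{2m,\nu+1}(2\sqrt{z})$ is correct and standard, but the decisive claim --- that as $\nu$ varies the real/complex classification of the zeros ``can change only at integer values of $\nu$'' because the signs of the coefficients are governed by the factors $\Gamma(\nu+n+1)$ --- is unsupported. The classification changes either when a real zero crosses the origin (this is indeed confined to integer $\nu$, since $g_{2m,\nu}(0)=(\nu+1)(\nu+2)\cdots(\nu+2m)$) or when two real zeros collide and leave the axis, i.e.\ when the discriminant of $g_{2m,\nu}$ vanishes; the sign pattern of the coefficients gives no control whatever over the location of multiple roots. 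Ruling out such collisions inside the open intervals $(-2s-2,-2s-1)$ and $(-2s-1,-2s)$ is precisely the content of Hurwitz's theorem, so your ``fix'' assumes what is to be proved. Even granting constancy on each interval, you never actually establish the count at a single test value of $\nu$ in each interval, and the Descartes cross-check cannot do it: for $\nu\in(-2s-2,-2s-1)$ it bounds the positive roots by $m-2s-1$ only modulo $2$ and allows up to $2s+1$ negative roots, so it is consistent with the claimed counts without pinning them down.

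The logical direction of the rest of the sketch is also reversed relative to the classical development, in a way that matters. Hurwitz proves the polynomial statement first, by induction on $m$ using the three-term recurrence connecting $g_{2m+2,\nu}$, $g_{2m,\nu}$ and $g_{2m-2,\nu}$ (a Sturm-sequence/interlacing argument in which the sign changes of the recurrence coefficients at $\nu=-2s-1,-2s$ are what create the complex pairs), and only afterwards deduces the distribution of the zeros of $J_{\nu}$ by letting $m\to\infty$ --- exactly the direction in which the present paper uses the lemma inside the proof of Lemma \ref{lemroots}. Your proposal runs the other way, from the zero distribution of $J_{\nu}$ and $J_{\nu+2m+1}$ back to $R_{2m,\nu+1}$ for every finite $m$; as you yourself flag, uniform convergence transfers such information only asymptotically in $m$, and the ``Rolle-type interlacing'' invoked for part (a) is an assertion rather than an argument (the standard proofs of (a) use the recurrence, or the orthogonality of the polynomials $R_{m,\nu}(1/x)$ for $\nu>0$). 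A self-contained proof should go through the recurrence-based induction; otherwise citing Hurwitz and Watson, as the paper does, is the appropriate course.
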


The next result is very important in the proof of the main results and may be of independent interest.
We note that Lemma \ref{lemroots} complement the result of Spigler \cite{spigler}, who proved that if $\nu>-1$ and $-a/b>\nu,$ then the equation
$aJ_{\nu}(z)+bzJ_{\nu}'(z)=0$ has an infinite sequence of positive roots and two purely imaginary roots.

\begin{lemma}\label{lemroots}
If $\alpha\geq0,$ then the following assertions are true:
\begin{enumerate}
\item[\bf a.] If $\nu>-1,$ then all the zeros of the equation $J_\nu(z)+\alpha{z}J'_\nu(z)=0$ are real.
\item[\bf b.] If  $\nu\in(-2,-1),$ then the function $z\mapsto J_\nu(z)+\alpha{z}J'_\nu(z)$ has two purely imaginary zeros and all the other zeros are real.
\end{enumerate}
\end{lemma}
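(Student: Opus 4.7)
My plan is to adapt Hurwitz's classical argument for the reality of zeros of Bessel functions, using the Lommel polynomials of Lemma \ref{lemHur} as approximants. First, I reduce to the zeros of an entire function of one squared variable. Using $zJ_\nu'(z)=\nu J_\nu(z)-zJ_{\nu+1}(z)$ and substituting series,
$$\phi(z):=J_\nu(z)+\alpha zJ_\nu'(z)=(1+\alpha\nu)J_\nu(z)-\alpha zJ_{\nu+1}(z)=(z/2)^\nu\,G\bigl((z/2)^2\bigr),$$
where
$$G(w)=\sum_{n\geq 0}\frac{(-1)^n\bigl[1+\alpha(2n+\nu)\bigr]}{n!\,\Gamma(n+\nu+1)}\,w^n$$
is entire of order $1/2$. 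Each nonzero zero $w_0$ of $G$ contributes two zeros $z=\pm 2\sqrt{w_0}$ of $\phi$: a positive real $w_0$ gives two real zeros, a negative real $w_0$ gives two purely imaginary zeros, and a non-real $w_0$ gives a quadruple of genuinely complex zeros. Hence part (a) reduces to showing that all zeros of $G$ are positive real when $\nu>-1$, and part (b) reduces to showing that $G$ has exactly one negative real zero and that all remaining zeros are positive real when $\nu\in(-2,-1)$.

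Second, I approximate $G$ by polynomials $P_m(w)$ of degree $m$ whose zero structure is controlled by Lemma \ref{lemHur}. Because $1+\alpha(2n+\nu)=(1+\alpha\nu)+2\alpha n$, the natural choice for $P_m$ is a linear combination of two Lommel polynomials, for example suitably rescaled copies of $g_{2m,\nu}$ and $g_{2m,\nu+1}$ (or equivalently of $g_{2m,\nu}$ and $wg_{2m,\nu}'(w)$), arranged so that under a rescaling $w\mapsto w/c_m$ with $c_m\to\infty$ matching leading coefficients, the coefficients of $P_m$ converge termwise to those of $G$. By Lemma \ref{lemHur}, $g_{2m,\nu}$ has only positive real zeros for $\nu>-1$; for $\nu\in(-2,-1)$ the relevant case is $s=0$, which yields exactly one negative real zero together with $m-1$ positive real zeros and no complex zeros. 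One must then verify that the specific linear combination $P_m$ inherits exactly this real-root structure.

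Third, I pass to the limit using Hurwitz's theorem on zeros of holomorphic functions: since $P_m\to G$ uniformly on compacta and $G\not\equiv 0$, every zero of $G$ is the limit of a sequence of zeros of $P_m$, and no compact set disjoint from the zeros of $G$ can contain zeros of $P_m$ for large $m$. Combined with the zero structure of $P_m$, this forces, in case (a), all zeros of $G$ to be positive real, and in case (b), exactly one zero of $G$ to be negative real with the rest positive real. Translating back through $z=\pm 2\sqrt{w}$ yields both parts of the lemma.

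The main obstacle is the second step. The perturbation $1+\alpha(2n+\nu)$ prevents $G$ from being the direct limit of a single sequence of Lommel polynomials, so one must construct an explicit combination $P_m$ whose zero structure is nevertheless governed by Lemma \ref{lemHur}. Showing that this combination preserves real-rootedness for $\nu>-1$, and preserves the count of precisely one negative root for $\nu\in(-2,-1)$, calls for an interlacing or sign-counting argument specific to the combination and is the essential new technical input beyond Hurwitz's original method.
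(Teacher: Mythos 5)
Your overall skeleton --- reduce to an entire function of $w=(z/2)^2$, approximate it by combinations of Lommel polynomials whose zeros are controlled by Lemma \ref{lemHur}, and pass to the limit with Hurwitz's theorem on zeros of holomorphic functions --- is exactly the route the paper takes, and your steps one and three are sound. But the step you yourself flag as ``the essential new technical input'' and leave unproved is the entire content of the lemma: you never establish that the approximating polynomial $P_m$ has only real zeros, with exactly one negative zero when $\nu\in(-2,-1)$. Without that, the limit argument yields nothing, so as written the proof has a genuine gap.

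The missing idea is elementary, and your second candidate for $P_m$ (the combination of $g_{2m,\nu}$ and $wg_{2m,\nu}'$) is the right one. Up to normalization the relevant polynomial is $h_{m,\nu}(w)=g_{2m,\nu}(w)+\alpha w g_{2m,\nu}'(w)$, and its real-rootedness follows from Rolle's theorem applied to $\omega(w)=w^{1/\alpha}g_{2m,\nu}(w)$ (using $|w|^{1/\alpha}g_{2m,\nu}(w)$ on the negative half-line): since $\omega$ vanishes at $0$ and at the $m$ real zeros $x_1<0<x_2<\cdots<x_m$ of $g_{2m,\nu}$ furnished by Lemma \ref{lemHur}, and since $\omega'(w)=0$ is equivalent to $h_{m,\nu}(w)=0$, the degree-$m$ polynomial $h_{m,\nu}$ acquires $m$ real zeros $y_1,\dots,y_m$ interlacing as $x_1<y_1<0<y_2<x_2<\cdots<x_{m-1}<y_m<x_m$; in particular exactly one is negative. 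Note also that this interlacing does more than settle real-rootedness: it traps the negative zero of $P_m$ in a compact subset of $(-\infty,0)$ uniformly in $m$, which is what guarantees that the limit function actually possesses a negative zero (equivalently, that $J_\nu(z)+\alpha zJ_\nu'(z)$ really has a pair of purely imaginary zeros), rather than merely having no zeros off the real and imaginary axes. With this Rolle--interlacing argument supplied, your proposal closes and coincides with the paper's proof.
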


\begin{proof}[\bf Proof]
We have to discuss the case $\alpha>0,$ as the case $\alpha=0$ is well-known due to Hurwitz's result on zeros of Bessel functions of the first kind, see \cite{hu}. Let $g_{m,\nu}$ be the $m$-th Lommel polynomial. According to Lemma \ref{lemHur} if $\nu>-1,$ then $g_{2m,\nu}(z)$ has only positive real zeros; and if $\nu\in(-2,-1),$ then $g_{2m,\nu}(z)$ has only real zeros, of which exactly one is negative and all the others are positive. We consider the function  $\omega:\mathbb{C}\rightarrow\mathbb{R},$ defined by $\omega(z)=z^\frac{1}{\alpha}g_{2m,\nu}(z).$ Since $\omega(0)=0,$ the Rolle theorem implies that the equation $\omega'(z)=0$ has only positive real zeros if $\nu\in(-1,\infty),$ and for $\nu\in(-2,-1)$ has a negative zero and $m-1$ positive different zeros.    The equation  $\omega'(z)=0$ is equivalent to $g_{2m,\nu}(z)+\alpha{z}g_{2m,\nu}'(z)=0.$ Thus, the polynomial $h_{m,\nu}(z)=g_{2m,\nu}(z)+\alpha{z}g_{2m,\nu}'(z)$ of degree $m$ has only positive zeros if $\nu\in(-1,\infty),$ and has exactly one negative zero and $m-1$ positive zeros for $\nu\in(-2,-1).$ If $x_1<x_2<{\dots}<x_m$ denote the zeros of $g_{2m,\nu}(z)$ and $y_1<y_2<{\dots}<y_m$ the zeros of $h_{m,\nu}(z),$ then the following inequalities hold
\begin{equation}\label{f1s0a8d662wehl14}
x_1<y_1<0<y_2<x_2<y_3<x_3<{\dots}<x_{m-1}<y_m<x_m.
\end{equation}
On the other hand, we have \cite[p. 484]{wa}
$$\lim_{m\rightarrow\infty}\frac{h_{m,\nu}(z)}{\Gamma(\nu+2m+1)}=f_\nu(z)+\alpha{z}f_\nu'(z),$$  where  $$f_\nu(z)=2^{\nu}z^{-\nu}J_{\nu}(2\sqrt{z})=\sum_{n\geq0}\frac{(-1)^nz^n}{n!\Gamma(\nu+n+1)},$$
and the convergence is uniform on compact subsets of $\mathbb{C}.$  Thus, it follows that $z\mapsto f_\nu(z)+\alpha{z}f_\nu'(z)$ has exactly one
negative real zero and all the other zeros are real and
positive. This means that if we denote by $\{\pm\alpha_{\nu,n}|n\in\mathbb{N^*}\}$ the set of the zeros of
the equation $J_\nu(z)+\alpha{z}J'_{\nu}(z)=0,$ where $\alpha_{\nu,1}=ia,$ $a>0$ and
$0=\real\alpha_{\nu,1}<\alpha_{\nu,2}<\alpha_{\nu,3}<\ldots,$
then (\ref{f1s0a8d662wehl14}) implies that
$$0<a<b, \ 0<j_{\nu,2}<\alpha_{\nu,2}<j_{\nu,3}<\alpha_{\nu,3}<j_{\nu,4}<\alpha_{\nu,4}<\ldots,$$  where $j_{\nu,1}=ib, \ 0<j_{\nu,2}<j_{\nu,3}<\ldots$ are the zeros of $J_{\nu}(z)=0.$
\end{proof}

The next two lemmas have been proved in \cite{bs} provided that $\nu>-1.$ The main tool in the proofs of these lemmas were the following  estimations: if $H_\nu^{(1)}$ and $H_\nu^{(2)}$ denote the Bessel functions of the third kind then the following asymptotic expansions hold
$$H_\nu^{(1)}(w)=\left(\frac{2}{\pi{w}}\right)^{\frac{1}{2}}e^{i(w-\frac{1}{2}\nu\pi-\frac{1}{4}\pi)}(1+\eta_{1,\nu}(w)),\ H_\nu^{(2)}(w)=\left(\frac{2}{\pi{w}}\right)^{\frac{1}{2}}e^{-i(w-\frac{1}{2}\nu\pi-\frac{1}{4}\pi)}(1+\eta_{2,\nu}(w)),$$
where $\eta_{1,\nu}(w)$ and $\eta_{2,\nu}(w)$ are $\mathcal{O}(1/w),$ when $|w|$ is large. Since these estimations hold for every $\nu\in\mathbb{C}$ (see \cite[p. 198]{wa}) and the condition $\nu>-1$ has not been used, it follows that the following two lemmas hold for every $\nu\in\mathbb{R}.$

\begin{lemma}\label{lemg}
Let $z\in\mathbb{C}$ and let $\alpha_{\nu,n},$ $n\in\mathbb{N},$
denote the $n$-th zero of the equation  $J_\nu(z)-zJ_{\nu+1}(z)=0,$
where $0\leq\real\alpha_{\nu,1}\leq\real\alpha_{\nu,2}\leq{\dots}\leq\real\alpha_{\nu,n}\leq{\dots}.$ The following development holds for every $z\neq{\alpha_{\nu,n}}$
\begin{equation}\label{bbb4nhs4569033}\frac{g''_{\nu}(z)}{g'_{\nu}(z)}=\frac{zJ_{\nu+2}(z)-3J_{\nu+1}(z)}{J_\nu(z)-zJ_{\nu+1}(z)}
=-\sum_{n\geq1}\frac{2z}{\alpha^2_{\nu,n}-z^2}.\end{equation}
\end{lemma}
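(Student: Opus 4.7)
The lemma contains two equalities, which I would treat in turn.

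The first equality is a routine algebraic identity. Using the Bessel recurrence $J_\nu'(z)=(\nu/z)J_\nu(z)-J_{\nu+1}(z)$, a direct differentiation of $g_\nu(z)=2^\nu\Gamma(1+\nu)z^{1-\nu}J_\nu(z)$ gives
$$g_\nu'(z) = 2^\nu\Gamma(1+\nu)\,z^{-\nu}\bigl[J_\nu(z) - zJ_{\nu+1}(z)\bigr].$$
Applying the recurrence again with index $\nu+1$ (so that $zJ_{\nu+1}'(z)=(\nu+1)J_{\nu+1}(z)-zJ_{\nu+2}(z)$) and differentiating once more, the $\nu J_\nu$ terms cancel and one obtains
$$g_\nu''(z)=2^\nu\Gamma(1+\nu)\,z^{-\nu}\bigl[zJ_{\nu+2}(z)-3J_{\nu+1}(z)\bigr].$$
Dividing yields the first equality.

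For the Mittag--Leffler expansion I would introduce
$$\Phi(z) := (z/2)^{-\nu}\bigl[J_\nu(z) - zJ_{\nu+1}(z)\bigr],$$
so that $g_\nu'$ is a nonzero constant multiple of $\Phi$ and the two have the same zeros. Expanding in power series shows that $\Phi$ is an even entire function of $z$ of order $1$, equivalently an entire function of $w=z^2$ of order $1/2$. By Lemma~\ref{lemroots} its zeros are exactly $\{\pm\alpha_{\nu,n}\}_{n\geq 1}$ (with $\pm\alpha_{\nu,1}$ on the imaginary axis when $\nu\in(-2,-1)$). Since the order in $w$ is strictly below $1$, the Hadamard product is of genus $0$; because $\Phi$ is even and $\Phi(0)=1/\Gamma(\nu+1)\neq 0$, this yields
$$\Phi(z) = \Phi(0)\prod_{n\geq 1}\Bigl(1 - \frac{z^2}{\alpha_{\nu,n}^2}\Bigr),$$
with absolutely convergent product, the interlacing (\ref{f1s0a8d662wehl14}) together with $j_{\nu,n}\sim n\pi$ ensuring $\sum|\alpha_{\nu,n}|^{-2}<\infty$. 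Logarithmic differentiation then produces
$$\frac{g_\nu''(z)}{g_\nu'(z)} = \frac{\Phi'(z)}{\Phi(z)} = -\sum_{n\geq 1}\frac{2z}{\alpha_{\nu,n}^2-z^2}.$$

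The main subtlety, as signalled in the paragraph preceding the lemma, is the growth estimate on $\Phi$ required to justify the Hadamard factorization (equivalently, to exchange limit and sum in the Mittag--Leffler argument). This is precisely where the Hankel asymptotics for $H_\nu^{(1)}$ and $H_\nu^{(2)}$ enter: since those expansions are valid for every $\nu\in\mathbb{C}$, the estimates used for $\nu>-1$ in \cite{bs} transfer verbatim to the range $\nu\in(-2,-1)$. The presence of a purely imaginary pair of zeros merely makes $\alpha_{\nu,1}^2$ negative, which affects neither the convergence of the product nor the validity of the logarithmic derivative; nothing else in the proof depends on the sign of $\nu+1$, so the extension from $\nu>-1$ to general $\nu\in\mathbb{R}$ is automatic.
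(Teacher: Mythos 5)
Your proposal is correct, and it is worth noting that it is considerably more self-contained than what the paper actually does: the paper offers no proof of Lemma~\ref{lemg} at all, but merely cites \cite{bs} (where the lemma is proved for $\nu>-1$) and observes that the Hankel asymptotics for $H_\nu^{(1)}$, $H_\nu^{(2)}$ --- the growth estimates underpinning the Mittag--Leffler expansion there --- are valid for all $\nu\in\mathbb{C}$, so the argument transfers. Your computations of $g_\nu'$ and $g_\nu''$ are right, and your factorization route is sound: $\Phi(z)=(z/2)^{-\nu}\bigl[J_\nu(z)-zJ_{\nu+1}(z)\bigr]$ is even, entire, with $\Phi(0)=1/\Gamma(\nu+1)\neq0$ for $\nu\in(-2,-1)$, of order $1/2$ in $w=z^2$, so the genus-zero Hadamard product over the zeros $\alpha_{\nu,n}^2$ (whose location and reality properties come from Lemma~\ref{lemroots}, via $J_\nu(z)-zJ_{\nu+1}(z)=(1-\nu)\bigl[J_\nu(z)+\tfrac{1}{1-\nu}zJ_\nu'(z)\bigr]$ with $\tfrac{1}{1-\nu}>0$) gives $\Phi(z)=\Phi(0)\prod_{n\geq1}(1-z^2/\alpha_{\nu,n}^2)$, and logarithmic differentiation finishes the job. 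The one place where your framing diverges from your own argument: in the Hadamard version the order of $\Phi$ is read off directly from the Taylor coefficients, so the Hankel asymptotics are not actually needed at all; they are the crucial ingredient only if one proves the partial-fraction expansion by contour integration, which is presumably what \cite{bs} does. Either way the conclusion and its validity for $\nu\in(-2,-1)$ stand; your version has the advantage of making explicit why the purely imaginary pair of zeros causes no harm.
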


\begin{lemma}\label{lemh}
Let $z\in\mathbb{C}$ and let $\beta_{\nu,n},$ $n\in\mathbb{N},$ denote the $n$-th zero of the equation $(2-\nu)J_\nu(z)+zJ_\nu'(z)=0,$ where
$0\leq\real\beta_{\nu,1}\leq\real\beta_{\nu,2}\leq{\dots}\leq\real\beta_{\nu,n}\leq{\dots}.$ The following development holds for each $z\neq\beta_{\nu,n}$
\begin{equation}\label{bbb4nhs456x1s2d5f69033}z\frac{h_{\nu}''(z)}{h_{\nu}'(z)}=
\frac{\nu(\nu-2)J_\nu(z^{\frac{1}{2}})+(3-2\nu)z^{\frac{1}{2}}J_\nu'(z^{\frac{1}{2}})+
zJ_\nu''(z^{\frac{1}{2}})}{2(2-\nu)J_\nu(z^{\frac{1}{2}})+2z^{\frac{1}{2}}J_\nu'(z^{\frac{1}{2}})}=-\sum_{n\geq1}\frac{z}{\beta^2_{\nu,n}-z}.\end{equation}
\end{lemma}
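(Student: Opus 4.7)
The plan is to reproduce, with minor adjustments, the argument used for this lemma in the case $\nu>-1$ in \cite{bs}. Since, as noted in the excerpt, the Hankel asymptotic expansions quoted just above the statement are valid for every complex $\nu$, the same proof will go through for every $\nu\in\mathbb{R}$. I would split the work into two independent parts: the first equality in (\ref{bbb4nhs456x1s2d5f69033}) is a purely computational identity, while the second is a Mittag--Leffler expansion obtained from a Hadamard factorization of $h_\nu'$.

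First, differentiating $h_\nu(z)=2^\nu\Gamma(1+\nu)\,z^{1-\nu/2}J_\nu(z^{1/2})$ by the chain rule and writing $w=z^{1/2}$, I obtain
\[
 h_\nu'(z)=2^{\nu-1}\Gamma(1+\nu)\,w^{-\nu}\bigl[(2-\nu)J_\nu(w)+wJ_\nu'(w)\bigr].
\]
Because $w^{-\nu}J_\nu(w)$ is an even function of $w$, so is this bracketed combination divided by $w^{\nu}$; hence $h_\nu'$ is a single-valued entire function of $z$, and its zeros in the $z$-plane are exactly the squares $\beta_{\nu,n}^2$ of the roots of $(2-\nu)J_\nu(w)+wJ_\nu'(w)=0$. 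Differentiating once more with the chain rule and cancelling the common $w^{-\nu}$ factor yields the middle member of (\ref{bbb4nhs456x1s2d5f69033}), which establishes the first equality.

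For the second equality I would invoke Hadamard's factorization theorem. From the power series of $J_\nu(z^{1/2})$ one checks directly that $h_\nu(z)=z+\mathcal{O}(z^2)$, so $h_\nu'(0)=1$. Substituting the Hankel expansions for $J_\nu$ and $J_\nu'$, written through $H_\nu^{(1)}$ and $H_\nu^{(2)}$, into the formula for $h_\nu'$ gives a bound of the form $|h_\nu'(z)|\leq C\,e^{c|z|^{1/2}}$, valid outside small disks around the zeros and uniformly in $\arg z$. Consequently $h_\nu'$ is entire of order at most $1/2$, so by Hadamard's theorem it has genus zero and
\[
 h_\nu'(z)=\prod_{n\geq 1}\left(1-\frac{z}{\beta_{\nu,n}^2}\right),
\]
with no exponential prefactor (since the order is strictly less than $1$) and with the constant factor fixed by $h_\nu'(0)=1$. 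Taking the logarithmic derivative of this product and multiplying by $z$ delivers precisely the Mittag--Leffler expansion on the right-hand side of (\ref{bbb4nhs456x1s2d5f69033}).

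The delicate step is the growth bound which makes the Hadamard factorization available. One has to verify that the Hankel remainders $\eta_{j,\nu}(w)$ are indeed $\mathcal{O}(1/w)$ uniformly in $\arg w$, that the auxiliary factor $w^{-\nu}$ does not spoil the $\exp(c|z|^{1/2})$ estimate, and that the implicit constants can be chosen independent of the direction of approach to infinity. This is precisely the technical core of the argument in \cite{bs}; since it uses only properties of the Hankel asymptotics which hold for all $\nu\in\mathbb{C}$, the verification transfers unchanged to $\nu\in(-2,-1)$.
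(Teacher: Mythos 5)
Your proposal is correct and follows essentially the same route as the paper, which simply observes that the proof given in \cite{bs} for $\nu>-1$ rests only on the Hankel asymptotic expansions (valid for all $\nu\in\mathbb{C}$) and therefore transfers verbatim: a direct chain-rule computation for the first equality, and growth control from the Hankel asymptotics yielding the genus-zero factorization $h_\nu'(z)=\prod_{n\geq1}(1-z/\beta_{\nu,n}^2)$ (normalized by $h_\nu'(0)=1$), whose logarithmic derivative gives the Mittag--Leffler expansion. Your reconstruction fills in exactly the details the paper delegates to \cite{bs}, and no step fails for $\nu\in(-2,-1)$.
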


To prove the main results we will need also the next results.

\begin{lemma}
If $v\in{\mathbb{C}},$ $\delta\in{\mathbb{R}}$ and $\delta>|v|,$ then
\begin{equation}\label{a}  \frac{|v|}{\delta-|v|}\geq {\real}\left(\frac{v}{\delta-v}\right)\geq\frac{-|v|}{\delta+|v|},\end{equation}
\begin{equation}\ \label{abc2s4d}
\frac{|v|}{\delta+|v|}\geq\real\left(\frac{v}{\delta+v}\right)\geq\frac{-|v|}{\delta-|v|}.\end{equation}
Moreover, if $v\in{\mathbb{C}},$ $\gamma,\delta\in{\mathbb{R}}$ and $\gamma\geq\delta>r\geq|v|,$ then
\begin{equation}\label{4l5h6ddg89u}\frac{r^2}{(\delta-r)(\gamma+r)}\geq \real\left(\frac{v^2}{(\delta+v)(\gamma-v)}\right).\end{equation}
\end{lemma}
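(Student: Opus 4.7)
For \eqref{a} and \eqref{abc2s4d}, the plan is direct algebra. Writing $v = x + iy$ with $x^2 + y^2 = |v|^2$, one has
$$\real\!\left(\frac{v}{\delta - v}\right) = \frac{\delta x - |v|^2}{(\delta - x)^2 + y^2}.$$
For the upper bound in \eqref{a}, bringing $\frac{|v|}{\delta - |v|} - \real(v/(\delta - v))$ to a common denominator, the numerator factors as $\delta(\delta + |v|)(|v| - x)$, which is nonnegative since $x \leq |v|$. For the lower bound, the numerator of $\real(v/(\delta-v)) + |v|/(\delta + |v|)$ factors as $\delta(\delta - |v|)(|v| + x) \geq 0$. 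Inequality \eqref{abc2s4d} follows from \eqref{a} by replacing $v$ with $-v$.

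For \eqref{4l5h6ddg89u}, the plan is to reduce via the minimum principle for harmonic functions to a one-variable problem on the boundary circle. Since $\gamma \geq \delta > r \geq |v|$, the map $\varphi(v) = v^2/[(\delta+v)(\gamma - v)]$ is holomorphic on $\overline{\mathbb{D}(0, r)}$, so the harmonic function
$v \mapsto \real\!\left(\frac{r^2}{(\delta-r)(\gamma+r)} - \varphi(v)\right)$
attains its minimum on the closed disk on the boundary $|v| = r$. It therefore suffices to establish the inequality there.

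On $|v| = r$ the partial-fraction identity
$$\varphi(v) = -1 + \frac{\delta^2}{(\gamma+\delta)(\delta + v)} + \frac{\gamma^2}{(\gamma+\delta)(\gamma - v)},$$
together with $|\delta + v|^2 = \delta^2 + 2\delta r c + r^2$ and $|\gamma - v|^2 = \gamma^2 - 2\gamma r c + r^2$ where $c = \cos\theta$ for $v = re^{i\theta}$, yields $\real(\varphi(v)) = -1 + G(c)$ with
$$G(c) = \frac{\delta^2(\delta + rc)}{(\gamma+\delta)(\delta^2 + 2\delta rc + r^2)} + \frac{\gamma^2(\gamma - rc)}{(\gamma+\delta)(\gamma^2 - 2\gamma rc + r^2)}.$$

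The crux is to show $G(c) \leq G(-1)$ on $[-1, 1]$. A direct calculation gives
$$G'(c) = \frac{r}{(\gamma+\delta)\,p(c)^2 q(c)^2}\Big[\gamma^2(\gamma^2 - r^2)\,p(c)^2 - \delta^2(\delta^2 - r^2)\,q(c)^2\Big],$$
where $p(c) = \delta^2 + 2\delta rc + r^2$ and $q(c) = \gamma^2 - 2\gamma rc + r^2$. Factoring the bracket as a difference of squares shows that $\operatorname{sgn}(G'(c))$ agrees with $\operatorname{sgn}\!\bigl(\gamma\sqrt{\gamma^2 - r^2}\,p(c) - \delta\sqrt{\delta^2 - r^2}\,q(c)\bigr)$, which is a \emph{linear} function of $c$. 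Hence $G'$ changes sign at most once on $[-1, 1]$, so the maximum of $G$ is attained at an endpoint. Evaluating,
$$G(-1) = 1 + \frac{r^2}{(\delta-r)(\gamma+r)}, \qquad G(1) = 1 + \frac{r^2}{(\delta+r)(\gamma-r)},$$
and $G(-1) \geq G(1)$ because $\gamma \geq \delta$ implies $(\delta - r)(\gamma + r) \leq (\delta + r)(\gamma - r)$. This gives the stated bound, with equality at $v = -r$.

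The main obstacle is the last step, i.e.\ ruling out a maximum of $G$ in the interior of $[-1, 1]$; the key observation is that after the partial-fraction decomposition the sign of $G'$ is controlled by a linear (in $c$) factor, which reduces everything to routine algebra and a comparison of the two endpoints.
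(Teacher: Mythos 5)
Your proof is correct in substance but takes a genuinely different route from the paper's, most visibly for \eqref{4l5h6ddg89u}. For \eqref{a} the paper writes $v=|v|e^{i\theta}$ and reads off the extrema of $\theta\mapsto\real\left(v/(\delta-v)\right)$ from the sign of its $\theta$-derivative, while you factor the two relevant differences directly (the factorizations $\delta(\delta+|v|)(|v|-x)$ and $\delta(\delta-|v|)(|v|+x)$ are correct); your derivation of \eqref{abc2s4d} from \eqref{a} via $v\mapsto-v$ is cleaner than the paper's ``can be proved in a similar way.'' For \eqref{4l5h6ddg89u}, after the common reduction to $|v|=r$ (which the paper merely asserts and you rightly justify by harmonicity), the paper substitutes $t=\cos\theta$, clears all denominators down to a quadratic inequality $w_2(t)\ge0$, notes that $t=-1$ is a root, and uses Vieta's formulas to show the other root is $\ge1$; you instead split $\varphi$ into partial fractions and run a monotonicity analysis of $G$. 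Your decomposition is structurally more transparent and reuses the shape of the one-pole estimates \eqref{a}--\eqref{abc2s4d}; the paper's is more elementary, avoiding square roots and calculus entirely and ending in a single polynomial identity.

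One step of yours needs an extra sentence. ``$G'$ changes sign at most once on $[-1,1]$'' does not by itself place the maximum of $G$ at an endpoint: a sign change from $+$ to $-$ would produce an interior maximum. The gap closes immediately with data you already have: since $p'(c)=2\delta r\ge0$ and $q'(c)=-2\gamma r\le0$, the controlling factor $L(c)=\gamma\sqrt{\gamma^2-r^2}\,p(c)-\delta\sqrt{\delta^2-r^2}\,q(c)$ has slope $2\gamma\delta r\bigl(\sqrt{\gamma^2-r^2}+\sqrt{\delta^2-r^2}\bigr)\ge0$, so $L$ is nondecreasing and $G'$ can only pass from negative to positive; hence $G$ is decreasing-then-increasing on $[-1,1]$ and its maximum is indeed attained at an endpoint. (You also implicitly use $\gamma\sqrt{\gamma^2-r^2}\,p(c)+\delta\sqrt{\delta^2-r^2}\,q(c)>0$ to identify $\operatorname{sgn}G'$ with $\operatorname{sgn}L$; this holds because $p,q>0$ on $[-1,1]$ and $\gamma>r$.) With that observation added, your endpoint values $G(\pm1)$ and the comparison $G(-1)\ge G(1)$ are correct and the argument is complete.
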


\begin{proof}[\bf Proof]
In order to prove (\ref{a}), we denote $v=re^{i\theta}$ and we consider the function $u:[0,2\pi]\rightarrow\mathbb{R},$ defined by
$$u(\theta)=\real\frac{v}{\delta-v}=\frac{r\delta\cos\theta-r^2}{\delta^2+r^2-2r\delta\cos\theta}.$$
Since $$u'(\theta)=\frac{\delta{r}(r^2-\delta^2)\sin\theta}{(\delta^2+r^2-2r\delta\cos\theta)^2},$$
the function $u$ is decreasing on $[0,\pi]$ and increasing on $[\pi,2\pi],$ which implies that
$$\frac{|v|}{\delta-|v|}=u(0)=u(2\pi)\geq \real\left(\frac{v}{\delta-v}\right)\geq{u(\pi)}=\frac{-|v|}{\delta+|v|}.$$
The inequality (\ref{abc2s4d}) can be proved in a similar way. Now, in order to
prove (\ref{4l5h6ddg89u}) we note that it is enough to prove this inequality in the case $v=re^{i\theta},$ $\theta\in[0,2\pi].$ Thus, we have to show that
$$w(\pi)\geq{w(\theta)},  \  \textrm{where} \   w(\theta)=\frac{\gamma\delta\cos2\theta+r(\gamma-\delta)\cos\theta-r^2}{(\gamma^2+r^2-2\gamma{r}\cos\theta)(\delta^2+r^2+2\delta{r}\cos\theta)},\
\theta\in[0,2\pi].$$ Denoting $t=\cos\theta$  we obtain that this inequality is equivalent to $w_1(-1)\geq{w_1(t)},$  where $t\in[-1,1]$ and $$w(\theta)=w_1(t)=\frac{2\gamma\delta{t^2}+r(\gamma-\delta)t-r^2-\gamma\delta}{(\gamma^2+r^2-2\gamma{r}t)(\delta^2+r^2+2\delta{r}t)},$$
and this can be rewritten as $w_2(t)\geq0,$ where $t\in[-1,1]$ and
$$w_2(t)=(\gamma^2+r^2-2\gamma{r}t)(\delta^2+r^2+2\delta{r}t)- (2\gamma\delta{t^2}+r(\gamma-\delta)t-r^2-\gamma\delta)(\gamma+r)(\delta-r).$$
We note that $w_2(t)$ is a polynomial of degree two, and its roots satisfy $t_1=-1$ and
$$t_1t_2=-\frac{(\gamma^2+r^2)(\delta^2+r^2)+(r^2+\gamma\delta)(\gamma+r)(\delta-r)}{4\gamma\delta{r^2}+2\gamma\delta(\gamma+r)(\delta-r)}.$$
Thus, we get
$$t_2=\frac{(\gamma^2+r^2)(\delta^2+r^2)+(r^2+\gamma\delta)(\gamma+r)(\delta-r)}{4\gamma\delta{r^2}+2\gamma\delta(\gamma+r)(\delta-r)}.$$
Consequently, the inequality $w_2(t)\geq0$ is equivalent to $(1+t)(t_2-t)\geq0,$ $t\in[-1,1].$ In order to finish the proof we have to show that $t_2\geq1.$
A short calculation shows that this inequality is equivalent to
$$r(\gamma\delta-r^2)(\gamma-\delta)+r^2(\gamma-\delta)^2\geq0,$$
and with this the proof is done.
\end{proof}

\section{\bf Proofs of the Main Results}
\setcounter{equation}{0}

\begin{proof}[\bf Proof of Theorem \ref{th1}]
By using
$$z\frac{g_{\nu}''(z)}{g_{\nu}'(z)}=\frac{\nu(\nu-1)J_\nu(z)+2(1-\nu)zJ_\nu'(z)+z^2J_\nu''(z)}{(1-\nu)J_\nu(z)+zJ_\nu'(z)},$$
the fact that $J_{\nu}$ is a particular solution of the Bessel differential equation,
and the recurrence formula $zJ'_\nu(z)=\nu{J_\nu(z)}-zJ_{\nu+1}(z),$ it follows that
$$z\frac{g''_{\nu}(z)}{g'_{\nu}(z)}=z\frac{zJ_{\nu+2}(z)-3J_{\nu+1}(z)}{J_{\nu}(z)-zJ_{\nu+1}(z)}.$$
In view of \eqref{bbb4nhs4569033} we obtain
$$1+z\frac{g''_{\nu}(z)}{g'_{\nu}(z)}=1-2\sum_{n\geq1}\frac{z^2}{\alpha_{\nu,n}^2-z^2}.$$
By using Lemma \ref{lemroots} for $\alpha=(1-\nu)^{-1},$ the condition $\nu\in(-2,-1)$ implies $\alpha_{\nu,1}=ia,$ $a>0$  and   $\alpha_{\nu,n}>0$ for $n\in\{2,3,\dots\}.$ Thus, we get
$$1+z\frac{g''_{\nu}(z)}{g'_{\nu}(z)}=1+\frac{2z^2}{a^2+z^2}-2\sum_{n\geq2}\frac{z^2}{\alpha_{\nu,n}^2-z^2}
=1+\frac{1}{a^2}\frac{2z^2}{1+\frac{z^2}{a^2}}-2\sum_{n\geq2}\frac{z^2}{\alpha_{\nu,n}^2-z^2}.$$
On the other hand, the convergence of the function series in (\ref{bbb4nhs4569033}) is uniform on every compact subset of
$\mathbb{C}\setminus\{\alpha_{\nu,n}|n\in\mathbb{N}\}.$ Integrating both sides of the equality (\ref{bbb4nhs4569033}), it follows that
\begin{eqnarray}\label{bbb4ngkmhs4569mnd033}{g'_{\nu}(z)}=\prod_{n\geq1}\left(1-\frac{z^2}{\alpha^2_{\nu,n}}\right).\end{eqnarray}
The convergence of the infinite product is uniform on every compact subset of $\mathbb{C}.$ Comparing the coefficients of $z^2$ on both sides of
(\ref{bbb4ngkmhs4569mnd033}) we get the following  equality
\begin{equation}\label{ed15s4z7ax4x4x}
\sum_{n\geq1}\frac{1}{\alpha^2_{\nu,n}}=\frac{3}{4(\nu+1)}.
\end{equation}
The equality  (\ref{ed15s4z7ax4x4x}) implies
$$\frac{1}{a^2}=-\frac{3}{4(\nu+1)}+\sum_{n\geq2}\frac{1}{\alpha_{\nu,n}^2}$$
and using this we obtain that
$$1+z\frac{g''_{\nu}(z)}{g'_{\nu}(z)}=1-\frac{3a^2}{2(\nu+1)}\frac{z^2}{a^2+z^2}-
2\sum_{n\geq2}\frac{\alpha_{\nu,n}^2+a^2}{\alpha_{\nu,n}^2}\frac{z^4}{(\alpha_{\nu,n}^2-z^2)(a^2+z^2)}.$$
On the other hand, we have $-\frac{3a^2}{2(\nu+1)}>0,$ and taking
$v=z^2$ in the inequality (\ref{abc2s4d}) we get
$$\real\frac{z^2}{a^2+z^2}\geq\frac{-|z|^2}{a^2-|z|^2}\geq\frac{-r^2}{a^2-r^2},$$ for all $|z|\leq{r}<a.$ Moreover, taking $v=z^2$ in inequality    (\ref{4l5h6ddg89u}) it follows that $$\real\frac{z^4}{(\alpha_{\nu,n}^2-z^2)(a^2+z^2)}\leq\frac{r^4}{(\alpha_{\nu,n}^2+r^2)(a^2-r^2)},$$  for all   $|z|\leq{r}<a<\alpha_{\nu,n}$ and $n\in\{2,3,\dots\}.$ Summarizing, provided that $|z|\leq{r}<a,$ the following
inequality holds
$$\real\left(1+z\frac{g''_{\nu}(z)}{g'_{\nu}(z)}\right)\geq1+\frac{3a^2}{2(\nu+1)}\frac{r^2}{a^2-r^2}-
2\sum_{n\geq2}\frac{\alpha_{\nu,n}^2+a^2}{\alpha_{\nu,n}^2}\frac{r^4}{(\alpha_{\nu,n}^2+r^2)(a^2-r^2)}=\left.
\left[\real\left(1+z\frac{g''_{\nu}(z)}{g'_{\nu}(z)}\right)\right]\right|_{z=ir}.$$
This means that
$${\inf_{z\in\mathbb{D}(0,r)}}{\real}\left(1+z\frac{g''_{\nu}(z)}{g'_{\nu}(z)}\right)=
1+ir\frac{g''_{\nu}(ir)}{g'_{\nu}(ir)}=1-\frac{2r^2}{a^2-r^2}+2\sum_{n\geq2}\frac{r^2}{\alpha_{\nu,n}^2+r^2},$$
for all $r\in(0,a).$ Now, let us consider the function
$\varphi:(0,a)\rightarrow\mathbb{R},$  defined by
$$\varphi(r)=1-\frac{2r^2}{a^2-r^2}+2\sum_{n\geq 2}\frac{r^2}{\alpha_{\nu,n}^2+r^2}=1+ir\frac{g''_{\nu}(ir)}{g'_{\nu}(ir)}.$$ This function satisfy
$\lim_{r\searrow0}\varphi(r)=1>\alpha, \
\lim_{r\nearrow{a}}\varphi(r)=-\infty,$ and
$$\varphi'(r)=-\frac{4ra^2}{(a^2-r^2)^2}+\sum_{n\geq 2}\frac{4r\alpha_{\nu,n}^2}{(\alpha_{\nu,n}^2+r^2)^2}<
-\frac{4ra^2}{(a^2-r^2)^2}+\sum_{n\geq 2}\frac{4r}{\alpha_{\nu,n}^2}=-\frac{4ra^2}{(a^2-r^2)^2}+\frac{4r}{a^2}+\frac{3r}{\nu+1}<\frac{3r}{\nu+1}.$$
In other words, the function $\varphi$ maps $(0,a)$ into $(-\infty,1)$ and is strictly decreasing. Thus the equation
$1+ir\frac{g''_{\nu}(ir)}{g'_{\nu}(ir)}=\alpha$ has exactly one root in the interval $(0,a),$ and this equation is equivalent to
(\ref{k2ac7zdl0m}). If we denote by $r_2\in(0,a)$ the unique root of the equation
$1+ir\frac{g''_{\nu}(ir)}{g'_{\nu}(ir)}=\alpha,$ then by using the minimum principle of harmonic functions we have that
$$\real\left(1+\frac{zg_{\nu}''(z)}{g_{\nu}'(z)}\right)>\alpha\ \ \ \mbox{for all}\ \ \ z\in{\mathbb{D}(0,r_2)},$$
$$\inf_{{z\in{\mathbb{D}(0,r_2)}}}\real\left(1+\frac{zg_{\nu}''(z)}{g_{\nu}'(z)}\right)=\alpha,\ \ \inf_{{z\in{\mathbb{D}(0,r)}}}\real\left(1+\frac{zg_{\nu}''(z)}{g_{\nu}'(z)}\right)<\alpha,
\ \ \textrm{for} \ \ r>r_2,$$
 and the proof is done.
\end{proof}

\begin{proof}[\bf Proof of Theorem \ref{th2}]
Lemma \ref{lemh} implies that
$$z\frac{h_{\nu}''(z)}{h_{\nu}'(z)}=\frac{\nu(\nu-2)J_\nu(z^{\frac{1}{2}})+(3-2\nu)z^{\frac{1}{2}}J_\nu'(z^{\frac{1}{2}})+
zJ_\nu''(z^{\frac{1}{2}})}{2(2-\nu)J_\nu(z^{\frac{1}{2}})+2z^{\frac{1}{2}}J_\nu'(z^{\frac{1}{2}})}=-\sum_{n\geq1}\frac{z}{\beta^2_{\nu,n}-z}.$$
By using Lemma \ref{lemroots} for $\alpha=(2-\nu)^{-1},$ if $\nu\in(-2,-1),$ then $\beta_{1,\nu}=ic$ and $0<\beta_{\nu,2}<\beta_{\nu,3}<{\dots}<\beta_{\nu,n}<{\dots},$ and we infer
$$1+z\frac{h_{\nu}''(z)}{h_{\nu}'(z)}=1+\frac{z}{c^2+z}-\sum_{n\geq2}\frac{z}{\beta^2_{\nu,n}-z}.$$
On the other hand, by integrating both sides of the equality (\ref{bbb4nhs456x1s2d5f69033}), it follows that
\begin{equation}\label{bbb4nhg45d1s456x1s2d5f69033}h_\nu(z)=\prod_{n\geq1}\left(1-\frac{z}{\beta^2_{\nu,n}}\right).\end{equation}
Comparing the coefficients of $z$ on both sides of (\ref{bbb4nhg45d1s456x1s2d5f69033}) we get the following  equality
$$\sum_{n\geq1}\frac{1}{\beta^2_{\nu,n}}=\frac{1}{2(\nu+1)},$$
which in turn implies that
$$-\frac{1}{c^2}+\sum_{n\geq 2}\frac{1}{\beta^2_{\nu,n}}=\frac{1}{2(\nu+1)}.$$
Thus, we get
\begin{equation}\label{x1c28f4sj146669}1+z\frac{h_{\nu}''(z)}{h_{\nu}'(z)}=
1-\frac{c^2}{2(\nu+1)}\frac{z}{c^2+z}+\sum_{n\geq2}\left[-\frac{c^2+\beta^2_{\nu,n}}{\beta^2_{\nu,n}}
\frac{z^2}{(c^2+z)(\beta^2_{\nu,n}-z)}\right].\end{equation}
If $c^2>r\geq|z|,$ then the second inequality of (\ref{abc2s4d}) implies that
$$\real\left[-\frac{c^2}{2(\nu+1)}\frac{z}{c^2+z}\right]\geq\frac{c^2}{2(\nu+1)}\frac{r}{c^2-r}.$$
Since for $n\in\{2,3,\dots\}$ we have $\beta_{\nu,n}^2>c^2>r\geq|z|,$ by using the inequality (\ref{4l5h6ddg89u}) it follows that
$$\real\left[-\frac{c^2+\beta^2_{\nu,n}}{\beta^2_{\nu,n}}\frac{z^2}{(c^2+z)(\beta^2_{\nu,n}-z)}\right]
\geq-\frac{c^2+\beta^2_{\nu,n}}{\beta^2_{\nu,n}}\frac{r^2}{(c^2-r)(\beta^2_{\nu,n}+r)}.$$
If $c^2>r\geq|z|,$  then  these two inequalities and (\ref{x1c28f4sj146669}) together imply that
$$\real\left[1+z\frac{h_{\nu}''(z)}{h_{\nu}'(z)}\right]
\geq1+\frac{c^2}{2(\nu+1)}\frac{r}{c^2-r}-\sum_{n\geq2}\frac{c^2+\beta^2_{\nu,n}}{\beta^2_{\nu,n}}
\frac{r^2}{(c^2-r)(\beta^2_{\nu,n}+r)}=1-r\frac{h_{\nu}''(-r)}{h_{\nu}'(-r)}.$$
Thus, we obtain
$${\inf_{z\in\mathbb{D}(0,r)}}\real\left(1+z\frac{h''_{\nu}(z)}{h'_{\nu}(z)}\right)=1-r\frac{h_{\nu}''(-r)}{h_{\nu}'(-r)} \ \textrm{for \ all} \  \ r\in(0,c^2).$$
Now, consider the function $\phi:(0,c^2)\rightarrow\mathbb{R},$ defined by
$$\phi(r)=1+\frac{c^2}{2(\nu+1)}\frac{r}{c^2-r}-\sum_{n\geq2}\frac{c^2+\beta^2_{\nu,n}}{\beta^2_{\nu,n}}\frac{r^2}{(c^2-r)(\beta^2_{\nu,n}+r)}
=1-r\frac{h_{\nu}''(-r)}{h_{\nu}'(-r)}.$$
Since
$$\lim_{r\searrow0}\phi(r)=1, \ \lim_{r\nearrow{c^2}}\phi(r)=-\infty,$$
it follows that the equation $1-r\frac{h_{\nu}''(-r)}{h_{\nu}'(-r)}=\alpha$  has at last one real root in the interval $(0,c^2).$
Let $r_3$  denote the smallest positive real root of the equation $1-r\frac{h_{\nu}''(-r)}{h_{\nu}'(-r)}=\alpha.$
We have
$$\real\left(1+\frac{zh_{\nu}''(z)}{h_{\nu}'(z)}\right)>\alpha, \
z\in{\mathbb{D}(0,r_3)},$$ and if $r>r_3$ then
$$\inf_{z\in{\mathbb{D}(0,r)}}\real\left(1+\frac{zh_{\nu}''(z_0)}{h_{\nu}'(z)}\right)<\alpha.$$
In order to finish the proof we remark that
$1-r\frac{h_{\nu}''(-r)}{h_{\nu}'(-r)}=\alpha$  is equivalent to \eqref{neweq}.
\end{proof}


\begin{thebibliography}{width}

\bibitem[\bf BKS]{bks}
\textsc{\'A. Baricz, P.A. Kup\'an, R. Sz\'asz}, The radius of starlikeness of normalized Bessel functions of the first kind, {\em Proc. Amer. Math. Soc.} 142 (2014) 2019--2025.

\bibitem[\bf BS]{bs}
\textsc{\'A. Baricz, R. Sz\'asz}, The radius of convexity of normalized Bessel functions of the first kind, {\em Anal. Appl.} 12(5) (2014) 485--509.

\bibitem[\bf Br]{br}
\textsc{R.K. Brown}, Univalence of Bessel Functions, {\em Proc. Amer. Math. Soc.} 11(2) (1960) 278--283.

\bibitem[\bf Hu]{hu}
\textsc{A. Hurwitz}, \"Uber die Nullstellen der Besselschen Funktionen, {\em Math. Ann} 33 (1889) 246--266.

\bibitem[\bf Sp]{spigler}
\textsc{R. Spigler}, Sulle radici dell'equazione: $AJ_{\nu}(x)+BxJ_{\nu}'(x) = 0,$ {\em Atti Sem. Mat. Fis. Univ. Modena} 24(2) (1976) 399--419.

\bibitem[\bf Sz]{sz}
\textsc{R. Sz\'asz}, About the radius of starlikeness of Bessel functions of the first kind, {\em Monats. Math.} 176(2) (2015) 323--330.

\bibitem[\bf Wa1]{wa1}
\textsc{G.N. Watson}, The zeros of Lommel's polynomials
{\em Proc. London Math. Soc.} 19(2) (1921) 266--272.

\bibitem[\bf Wa2]{wa}
\textsc{G.N. Watson}, {\em A Treatise of the Theory of Bessel Functions}, Cambridge University Press, Cambridge, 1945.

\end{thebibliography}
\end{document}